\def\NZQ{\mathbb}               
\def\QQ{{\NZQ Q}}
\def\ZZ{{\NZQ Z}}
\def\RR{{\NZQ R}}
\def\frk{\mathfrak}               
\def\Phi{{\frk N}}
\def\opn#1#2{\def#1{\operatorname{#2}}} 
\opn\chara{char} 
\opn\length{\ell} 
\opn\pd{pd} 
\opn\rk{rk}
\opn\projdim{proj\,dim} 
\opn\injdim{inj\,dim} 
\opn\rank{rank}
\opn\depth{depth} 
\opn\grade{grade} 
\opn\height{height}
\opn\embdim{emb\,dim} 
\opn\codim{codim}
\opn\Tr{Tr} 
\opn\bigrank{big\,rank}
\opn\superheight{superheight}
\opn\lcm{lcm}
\opn\trdeg{tr\,deg}
\opn\reg{reg} 
\opn\lreg{lreg} 
\opn\ini{in} 
\opn\lpd{lpd}
\opn\size{size}
\opn\mult{mult}
\opn\dist{dist}
\opn\cone{cone}
\opn\lex{lex}
\opn\rev{rev}
\opn\div{div} \opn\Div{Div} \opn\cl{cl} \opn\Cl{Cl}
\opn\Spec{Spec} \opn\Supp{Supp} \opn\supp{supp} \opn\Sing{Sing}
\opn\Ass{Ass} \opn\Min{Min}
\opn\Ann{Ann} \opn\Rad{Rad} \opn\Soc{Soc}
\opn\Syz{Syz} \opn\Im{Im} \opn\Ker{Ker} \opn\Coker{Coker}
\opn\Am{Am} \opn\Hom{Hom} \opn\Tor{Tor} \opn\Ext{Ext}
\opn\End{End} \opn\Aut{Aut} \opn\id{id} \opn\ini{in}
\opn\nat{nat}
\opn\pff{pf}
\opn\Pf{Pf} \opn\GL{GL} \opn\SL{SL} \opn\mod{mod} \opn\ord{ord}
\opn\Gin{Gin}
\opn\Hilb{Hilb}\opn\adeg{adeg}\opn\std{std}\opn\ip{infpt}
\opn\Pol{Pol}
\opn\sat{sat}
\opn\Var{Var}
\opn\Gen{Gen}
\opn\aff{aff} \opn\con{conv} \opn\inte{int} \opn\st{st}
\opn\lk{lk} \opn\cn{cn} \opn\core{core} \opn\vol{vol}
\opn\link{link} \opn\star{star}
\opn\gr{gr}
\def\Ac{{\mathcal A}}
\def\Bc{{\mathcal B}}
\def\Pc{{\mathcal P}}
\def\Qc{{\mathcal Q}}
\def\Wc{{\mathcal W}}
\def\pot#1#2{#1[\kern-0.28ex[#2]\kern-0.28ex]}
\opn\dirlim{\underrightarrow{\lim}}
\opn\inivlim{\underleftarrow{\lim}}
\let\to=\rightarrow
\def\Implies{\ifmmode\Longrightarrow \else
        \unskip${}\Longrightarrow{}$\ignorespaces\fi}
\def\implies{\ifmmode\Rightarrow \else
        \unskip${}\Rightarrow{}$\ignorespaces\fi}
\def\iff{\ifmmode\Longleftrightarrow \else
        \unskip${}\Longleftrightarrow{}$\ignorespaces\fi}
\newtheorem{Theorem}{Theorem}[section]
\newtheorem{Lemma}[Theorem]{Lemma}
\newtheorem{Corollary}[Theorem]{Corollary}
\newtheorem{Proposition}[Theorem]{Proposition}
\newtheorem{Example}[Theorem]{Example}
\let\epsilon\varepsilon
\let\phi=\varphi
\let\kappa=\varkappa
\def\qed{\ifhmode\textqed\fi
      \ifmmode\ifinner\quad\qedsymbol\else\dispqed\fi\fi}
\def\textqed{\unskip\nobreak\penalty50
       \hskip2em\hbox{}\nobreak\hfil\qedsymbol
       \parfillskip=0pt \finalhyphendemerits=0}
\def\dispqed{\rlap{\qquad\qedsymbol}}
\opn\dis{dis}
\opn\height{height}
\opn\dist{dist}
\def\pnt{{\raise0.5mm\hbox{\large\bf.}}}
\opn\Lex{Lex}
\begin{document}
\title{Integer decomposition property of free sums of convex polytopes}
\author{Takayuki Hibi and Akihiro Higashitani}
\thanks{
{\bf 2010 Mathematics Subject Classification:}
52B20. \\
\, \, \, {\bf Keywords:}
integral convex polytope, free sum, integer decomposition property.  
}
\address{Takayuki Hibi,
Department of Pure and Applied Mathematics,
Graduate School of Information Science and Technology,
Osaka University,
Toyonaka, Osaka 560-0043, Japan}
\email{hibi@math.sci.osaka-u.ac.jp}
\address{Akihiro Higashitani,
Department of Mathematics, Graduate School of Science, 
Kyoto University, Kitashirakawa-Oiwake cho, Sakyo-ku, Kyoto 606-8502, Japan}
\email{ahigashi@math.kyoto-u.ac.jp}
\begin{abstract}
Let $\Pc \subset \RR^{d}$ and $\Qc \subset \RR^e$ be integral convex polytopes 
of dimension $d$ and $e$ which contain the origin of $\RR^{d}$ and $\RR^e$, respectively. 
In the present paper, under some assumptions, 
the necessary and sufficient condition for the free sum of $\Pc$ and $\Qc$ 
to possess the integer decomposition property will be presented. 
\end{abstract}
\maketitle
\section*{Introduction}
%
%
%
A convex polytope is called {\em integral} if any of its vertices has integer coordinates. 
Let $\Pc \subset \RR^{d}$ and $\Qc \subset \RR^{e}$ be convex polytopes 
and suppose that ${\bf 0}_d \in \Pc$ and ${\bf 0}_e \in \Qc$, 
where ${\bf 0}_d \in \RR^d$ denotes the origin of $\RR^d$ and ${\bf 0}_e \in \RR^e$ denotes that of $\RR^e$. 
We introduce the canonical injections $\mu : \RR^{d} \to \RR^{d+e}$ 
by setting $\mu(\alpha) = (\alpha, {\bf 0}_e) \in \RR^{d+e}$ 
with $\alpha \in \RR^{d}$ and $\nu : \RR^{e} \to \RR^{d+e}$ 
by setting $\nu(\beta) = ({\bf 0}_d, \beta) \in \RR^{d+e}$ with $\beta \in \RR^{e}$. 
In particular, $\mu({\bf 0}_d) = \nu({\bf 0}_e)={\bf 0}_{d+e}$, 
where ${\bf 0}_{d+e}$ denotes the origin of $\RR^{d+e}$. 
Then $\mu(\Pc)$ and $\nu(\Qc)$ are convex polytopes of $\RR^{d+e}$ 
with $\mu(\Pc) \cap \nu(\Qc) = {\bf 0}_{d+e} \in \RR^{d+e}$. 
The {\em free sum} of $\Pc$ and $\Qc$ 
is the convex hull of the set $\mu(\Pc) \cup \nu(\Qc)$ in $\RR^{d+e}$.  
It is written as $\Pc \oplus \Qc$. One has $\dim (\Pc \oplus \Qc) = \dim \Pc + \dim \Qc$.

For a convex polytope $\Pc \subset \RR^{d}$ and for each integer $n \geq 1$, 
we write $n \Pc$ for the convex polytope 
$\{ n \alpha \, : \, \alpha \in \Pc \} \subset \RR^{d}$.
We say that an integral convex polytope $\Pc \subset \RR^{d}$
possesses the {\em integer decomposition property} if, for each $n \geq 1$
and for each $\gamma \in n \Pc \cap \ZZ^{d}$, 
there exist $\gamma^{(1)}, \ldots, \gamma^{(n)}$ belonging to $\Pc \cap \ZZ^{d}$
such that $\gamma = \gamma^{(1)} + \ldots + \gamma^{(n)}$.

Let $\Pc \subset \RR^{d}$ and $\Qc \subset \RR^{e}$ be convex polytopes 
containing the origin (of $\RR^{d}$ or $\RR^{e}$). 
It is then easy to see that if the free sum of $\Pc$ and $\Qc$ possesses the integer decomposition property, 
then each of $\Pc$ and $\Qc$ possesses the integer decomposition property. 
On the other hand, the converse is not true in general. (See Example \ref{idp}.) 
The purpose of the present paper is to show the following


\begin{Theorem}\label{Stockholm}
Let $\Pc \subset \RR^d$ and $\Qc \subset \RR^e$ be integral convex polytopes 
of dimension $d$ and dimension $e$ containing ${\bf 0}_d$ and ${\bf 0}_e$, respectively. 
Suppose that $\Pc$ and $\Qc$ satisfy $\ZZ(\Pc \cap \ZZ^d) = \ZZ^d$, $\ZZ(\Qc \cap \ZZ^e) = \ZZ^e$ and 
\begin{align}\label{condition}
(\Pc \oplus \Qc) \cap \ZZ^{d+e} = \mu( \Pc \cap \ZZ^d) \cup \nu( \Qc \cap \ZZ^e).
\end{align}
Then the free sum $\Pc \oplus \Qc$ possesses the integer decomposition property 
if and only if the following two conditions are satisfied: 
\begin{itemize}
\item each of $\Pc$ and $\Qc$ possesses the integer decomposition property; 
\item either $\Pc$ or $\Qc$ satisfies that 
the equation of each facet is of the form $\sum_{i=1}^{f} a_{i}z_{i} = b$, 
where each $a_{i}$ is an integer, $b \in \{ 0, 1\}$ and $f \in \{d,e\}$. 
\end{itemize}
\end{Theorem}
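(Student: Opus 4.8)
The plan is to convert the integer decomposition property of $\Pc\oplus\Qc$ into a purely numerical statement about ``gauge functions'' of $\Pc$ and $\Qc$, using \eqref{condition} to control exactly which lattice points may occur in a decomposition. Write the facet inequalities of $\Pc$ with primitive integer normals as $\langle a_i,z\rangle\le b_i$; since ${\bf 0}_d\in\Pc$ each $b_i$ is a nonnegative integer, and I call such a facet \emph{central} when $b_i=0$. A direct check (e.g.\ via polar duality when the origins are interior, and by a validity‑plus‑dimension count in general) shows that the facets of $\Pc\oplus\Qc$ are precisely: the inequalities $\langle a_i,z\rangle\le 0$ coming from the central facets of $\Pc$; the analogous inequalities $\langle c_j,w\rangle\le 0$ coming from $\Qc$; and, for each non‑central facet $\langle a_i,z\rangle\le b_i$ of $\Pc$ and each non‑central facet $\langle c_j,w\rangle\le b'_j$ of $\Qc$, the inequality $b'_j\langle a_i,z\rangle+b_i\langle c_j,w\rangle\le b_ib'_j$. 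Hence for $\gamma=(\gamma_1,\gamma_2)\in\ZZ^{d+e}$ and $n\ge 1$ one has $\gamma\in n(\Pc\oplus\Qc)$ iff $\gamma_1$ satisfies the central‑facet inequalities of $\Pc$, $\gamma_2$ those of $\Qc$, and $p(\gamma_1)+q(\gamma_2)\le n$, where $p(\gamma_1):=\max_i\langle a_i,\gamma_1\rangle/b_i$ ranges over the non‑central facets of $\Pc$ and $q$ is defined likewise for $\Qc$. Moreover, once those central‑facet inequalities hold, $\min\{m\in\ZZ_{\ge 0}:\gamma_1\in m\Pc\}=\max(0,\lceil p(\gamma_1)\rceil)$, which I denote $\|\gamma_1\|_\Pc$ (and similarly $\|\gamma_2\|_\Qc$).

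The crux is the equivalence: assuming \eqref{condition} holds and that $\Pc$ and $\Qc$ have the integer decomposition property, a lattice point $\gamma=(\gamma_1,\gamma_2)\in n(\Pc\oplus\Qc)$ is a sum of $n$ lattice points of $\Pc\oplus\Qc$ if and only if $\|\gamma_1\|_\Pc+\|\gamma_2\|_\Qc\le n$. For the forward direction, \eqref{condition} says every lattice point of $\Pc\oplus\Qc$ lies in $\mu(\Pc\cap\ZZ^d)\cup\nu(\Qc\cap\ZZ^e)$, so a decomposition consists of $m$ summands $\mu(x)$ with $x\in\Pc\cap\ZZ^d$ and $n-m$ summands $\nu(y)$ with $y\in\Qc\cap\ZZ^e$; projecting onto the two coordinate subspaces gives $\gamma_1\in m\Pc$ and $\gamma_2\in(n-m)\Qc$, i.e.\ $\|\gamma_1\|_\Pc\le m$ and $\|\gamma_2\|_\Qc\le n-m$. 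Conversely, if $\|\gamma_1\|_\Pc+\|\gamma_2\|_\Qc\le n$, decompose $\gamma_1$ into $\|\gamma_1\|_\Pc$ lattice points of $\Pc$ and $\gamma_2$ into $\|\gamma_2\|_\Qc$ lattice points of $\Qc$ by their integer decomposition properties, push them into $\RR^{d+e}$ via $\mu$ and $\nu$, and pad with copies of ${\bf 0}_{d+e}$.

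Now the ``if'' part: assume $\Pc,\Qc$ have the property and, say, every facet of $\Qc$ has constant in $\{0,1\}$; then $q(\gamma_2)\in\ZZ_{\ge 0}$ for all $\gamma_2$, so for $(\gamma_1,\gamma_2)\in n(\Pc\oplus\Qc)\cap\ZZ^{d+e}$ we get $\|\gamma_1\|_\Pc+\|\gamma_2\|_\Qc=\lceil p(\gamma_1)\rceil+q(\gamma_2)=\lceil p(\gamma_1)+q(\gamma_2)\rceil\le n$, and the equivalence yields the integer decomposition property of $\Pc\oplus\Qc$. For the ``only if'' part, first $\Pc$ and $\Qc$ inherit the property: the projection $\RR^{d+e}\to\RR^d$ sends $\Pc\oplus\Qc$ onto $\Pc$, so $\gamma_1\in n\Pc\cap\ZZ^d$ lifts to $\mu(\gamma_1)\in n(\Pc\oplus\Qc)\cap\ZZ^{d+e}$ and projecting a decomposition of $\mu(\gamma_1)$ back down gives one of $\gamma_1$. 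Suppose, for contradiction, that neither $\Pc$ nor $\Qc$ satisfies the facet condition. Then $\Pc$ has a facet $\langle a,z\rangle\le r$ with $a$ primitive and $r\ge 2$, and $\Qc$ has a facet $\langle c,w\rangle\le s$ with $c$ primitive and $s\ge 2$. The geometric input I need is: for every sufficiently large integer $j$ there exists $\gamma_1\in\ZZ^d$ with $\langle a,\gamma_1\rangle=jr+1$ and $\langle a_i,\gamma_1\rangle\le jb_i$ for every other facet of $\Pc$. Granting this, $p(\gamma_1)=j+1/r$ and $\|\gamma_1\|_\Pc=j+1$, and symmetrically one finds $\gamma_2\in\ZZ^e$ with $q(\gamma_2)=j'+1/s$, $\|\gamma_2\|_\Qc=j'+1$. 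Put $n:=j+j'+1$; since $r,s\ge 2$ we have $1/r+1/s\le 1$, so $p(\gamma_1)+q(\gamma_2)=j+j'+1/r+1/s\le n$, whence $(\gamma_1,\gamma_2)\in n(\Pc\oplus\Qc)\cap\ZZ^{d+e}$, while $\|\gamma_1\|_\Pc+\|\gamma_2\|_\Qc=j+j'+2>n$ shows (via the equivalence) that $(\gamma_1,\gamma_2)$ is not a sum of $n$ lattice points of $\Pc\oplus\Qc$ — contradicting the integer decomposition property.

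The main obstacle is the geometric claim invoked above. I would prove it by noting that $\{z\in\RR^d:\langle a,z\rangle=jr+1,\ \langle a_i,z\rangle\le jb_i\ \text{for every facet of }\Pc\text{ other than }\langle a,z\rangle\le r\}$ equals $j\cdot S_j$, where $S_j=\{y:\langle a,y\rangle=r+1/j,\ \langle a_i,y\rangle\le b_i\ \text{for all those other facets}\}$ is the section, at level $r+1/j$ just outside the chosen facet, of the polyhedron obtained by deleting the inequality $\langle a,z\rangle\le r$ from $\Pc$. As $j\to\infty$ this section converges to the facet $\Pc\cap\{\langle a,z\rangle=r\}$, which is $(d-1)$‑dimensional with positive relative volume; hence for large $j$ the set $j\cdot S_j$ contains a ball of radius tending to infinity inside the affine hyperplane $\{\langle a,z\rangle=jr+1\}$, and since $a$ is primitive that hyperplane carries a full‑rank affine lattice, which therefore meets $j\cdot S_j$. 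A secondary technical point is the careful bookkeeping of the facet and gauge description of $\Pc\oplus\Qc$, especially handling the facets through the origin; it is here, and in the forward direction of the equivalence, that the standing hypotheses $\ZZ(\Pc\cap\ZZ^d)=\ZZ^d$, $\ZZ(\Qc\cap\ZZ^e)=\ZZ^e$ and \eqref{condition} enter, guaranteeing that $\Pc\oplus\Qc$ and all its dilates are governed by the lattice points $\mu(\Pc\cap\ZZ^d)\cup\nu(\Qc\cap\ZZ^e)$ alone.
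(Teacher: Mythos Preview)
Your proof is correct and takes a genuinely different route from the paper's. The paper argues entirely through generating functions: writing $\Ac$, $\Bc$, $\Ac\oplus\Bc$ for the configurations of $\Pc$, $\Qc$, $\Pc\oplus\Qc$, it shows that (i) under the lattice-generation hypotheses, the integer decomposition property is equivalent to $\delta(\cdot)=h(K[\cdot])$; (ii) under \eqref{condition}, $h(K[\Ac\oplus\Bc])=h(K[\Ac])\,h(K[\Bc])$ because $K[\Ac\oplus\Bc]\cong (K[\Ac]\otimes K[\Bc])/(s-s')$; and (iii) it then invokes the theorem of Beck--Jayawant--McAllister that $\delta(\Pc\oplus\Qc)=\delta(\Pc)\,\delta(\Qc)$ holds precisely when one of the polytopes satisfies the facet condition. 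Chaining these three facts gives both directions in a few lines. Your approach instead unfolds the combinatorics directly: you write down the facet description of the free sum, introduce the gauges $p,q$, reduce decomposability under \eqref{condition} to the integer inequality $\lceil p(\gamma_1)\rceil+\lceil q(\gamma_2)\rceil\le n$, and then handle each direction by hand---in particular you construct the obstructing lattice point $(\gamma_1,\gamma_2)$ explicitly when both polytopes have a facet with right-hand side at least $2$. Your argument is longer and carries two honest technical debts (the facet description of $\Pc\oplus\Qc$ and the existence of a lattice point in $j\cdot S_j$, both of which you outline correctly), but it is self-contained and elementary: it never touches toric rings or Hilbert series, and it effectively reproves the relevant instance of the Beck--Jayawant--McAllister result rather than citing it. The paper's proof, by contrast, is short and structurally clean but outsources the geometric content to that external theorem and to the algebra of $h$-polynomials; it also makes essential use of the hypotheses $\ZZ(\Pc\cap\ZZ^d)=\ZZ^d$ and $\ZZ(\Qc\cap\ZZ^e)=\ZZ^e$ to identify $\ZZ\Ac$ with $\ZZ^{d+1}$, whereas in your argument those hypotheses play no visible role.
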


An integral convex polytope $\Pc \subset \RR^d$ is called a {\em $(0, 1)$-polytope} 
if each vertex of $\Pc$ belongs to $\{0, 1\}^{d}$.  
It then follows that the equality \eqref{condition} is always satisfied 
if each of $\Pc$ and $\Qc$ is a $(0,1)$-polytope. 
As an immediate corollary of Theorem \ref{Stockholm}, we also obtain the following 
\begin{Corollary}\label{kei}
Let $\Pc \subset \RR^d$ be a $(0,1)$-polytope of dimension $d$ containing ${\bf 0}_d$ 
and $\Qc \subset \RR^e$ an integral convex polytope of dimension $e$ containing ${\bf 0}_e$. 
Suppose that $\Pc$ and $\Qc$ satisfy $\ZZ(\Pc \cap \ZZ^d) = \ZZ^d$ and $\ZZ(\Qc \cap \ZZ^e) = \ZZ^e$. 
Then the free sum $\Pc \oplus \Qc$ possesses the integer decomposition property 
if and only if the following two conditions are satisfied: 
\begin{itemize}
\item each of $\Pc$ and $\Qc$ possesses the integer decomposition property; 
\item either $\Pc$ or $\Qc$ satisfies that 
the equation of each facet is of the form $\sum_{i=1}^{f} a_{i}z_{i} = b$, 
where each $a_{i}$ is an integer, $b \in \{ 0, 1\}$ and $f \in \{d,e\}$. 
\end{itemize}
\end{Corollary}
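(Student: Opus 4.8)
The plan is to derive Corollary~\ref{kei} directly from Theorem~\ref{Stockholm}. Its conclusion is verbatim that of the theorem, and the hypotheses $\ZZ(\Pc\cap\ZZ^d)=\ZZ^d$ and $\ZZ(\Qc\cap\ZZ^e)=\ZZ^e$ are assumed outright, so the only thing to check is that the equality \eqref{condition} holds here. For this one uses that $\Pc$ is a $(0,1)$-polytope: being the convex hull of its vertices, which lie in $\{0,1\}^d$, it satisfies $\Pc\subseteq[0,1]^d$. As is readily seen from the definition of the free sum, every point of $\Pc\oplus\Qc$ has the form $(\lambda\alpha,(1-\lambda)\beta)$ with $\alpha\in\Pc$, $\beta\in\Qc$ and $\lambda\in[0,1]$. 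So let $\xi\in(\Pc\oplus\Qc)\cap\ZZ^{d+e}$ and write $\xi=(\lambda\alpha,(1-\lambda)\beta)$ accordingly. The $\RR^d$-coordinate $\lambda\alpha$ lies in $\lambda\Pc\subseteq[0,\lambda]^d$ and is an integer vector; if $\lambda<1$ then every entry of $\lambda\alpha$ is an integer in $[0,\lambda]\subseteq[0,1)$, so $\lambda\alpha=\mathbf{0}_d$ and $\xi=\nu((1-\lambda)\beta)$ with $(1-\lambda)\beta\in(1-\lambda)\Qc\subseteq\Qc$ (because $\mathbf{0}_e\in\Qc$) and $(1-\lambda)\beta\in\ZZ^e$, whence $\xi\in\nu(\Qc\cap\ZZ^e)$; while if $\lambda=1$ then $\xi=\mu(\alpha)$ with $\alpha\in\Pc\cap\ZZ^d$, so $\xi\in\mu(\Pc\cap\ZZ^d)$. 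This shows $(\Pc\oplus\Qc)\cap\ZZ^{d+e}\subseteq\mu(\Pc\cap\ZZ^d)\cup\nu(\Qc\cap\ZZ^e)$, and the reverse inclusion is immediate from $\mu(\Pc)\cup\nu(\Qc)\subseteq\Pc\oplus\Qc$. Hence \eqref{condition} holds, and Theorem~\ref{Stockholm} yields the corollary.

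There is really no obstacle in this argument; the one point worth flagging is that Corollary~\ref{kei} assumes only $\Pc$ (and not necessarily $\Qc$) to be a $(0,1)$-polytope, so one must notice, as above, that the verification of \eqref{condition} uses nothing about $\Qc$ beyond $\mathbf{0}_e\in\Qc$ and its integrality. The remark immediately preceding the corollary records only the symmetric statement that \eqref{condition} holds when both $\Pc$ and $\Qc$ are $(0,1)$-polytopes, which would not by itself suffice here.

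For context, the entire substance lies in Theorem~\ref{Stockholm}, which one would prove by using \eqref{condition} to reduce the integer decomposition property of $\Pc\oplus\Qc$ to a numerical condition on $\Pc$ and $\Qc$. Writing $g_{\Pc}(x)=\min\{\lambda\ge 0:x\in\lambda\Pc\}$ for $x$ in the cone $\bigcup_{\lambda\ge 0}\lambda\Pc$, and likewise $g_{\Qc}$, one checks that a lattice point $\gamma=(\gamma_1,\gamma_2)$ of $n(\Pc\oplus\Qc)$ is a sum of $n$ lattice points of $\Pc\oplus\Qc$ precisely when there is an integer $k$ with $g_{\Pc}(\gamma_1)\le k$ and $g_{\Qc}(\gamma_2)\le n-k$, i.e.\ when $\lceil g_{\Pc}(\gamma_1)\rceil+g_{\Qc}(\gamma_2)\le n$ — at least once $\Pc$ and $\Qc$ are known to have the integer decomposition property, which is forced by that of $\Pc\oplus\Qc$. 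If $\Pc$ (or $\Qc$) has all facet equations with constant term in $\{0,1\}$, then the corresponding $g$ is integer-valued on lattice points and the inequality is automatic; conversely, starting from a facet of $\Pc$ with primitive equation $\langle a,z\rangle=b$, $b\ge 2$, a vertex $v$ of that facet, and a lattice vector $u$ with $\langle a,u\rangle=1$ and $\langle a',u\rangle\le 0$ for all the other facet normals $a'$ through $v$ — obtained by a Farkas/separation argument, using that the normal of a facet through $v$ spans an extreme ray of the normal cone at $v$ — one finds $\gamma_1=Nv+u$ with $g_{\Pc}(\gamma_1)=N+1/b\notin\ZZ$ for $N$ large; pairing such $\gamma_1$ and $\gamma_2$ for the two offending facets and choosing $n$ appropriately exhibits a lattice point of $n(\Pc\oplus\Qc)$ that is not a sum of $n$ lattice points. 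That construction of lattice points with a prescribed non-integral ``dilation'' $N+1/b$ is the genuinely delicate step on which the whole result turns.
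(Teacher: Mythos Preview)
Your proof of the corollary is correct and follows the paper's approach: verify that condition~\eqref{condition} holds when $\Pc$ is a $(0,1)$-polytope, then invoke Theorem~\ref{Stockholm}. The paper gives no explicit proof beyond calling the corollary ``immediate''; the fact you actually need---that a single $(0,1)$-polytope $\Pc$ already forces~\eqref{condition}---is justified only later, in the example following Proposition~\ref{meidai}, via the observation $\Wc(\Pc)=\emptyset$. Your direct argument using $\Pc\subseteq[0,1]^d$ is self-contained and slightly more elementary, and your remark that the sentence in the introduction (which assumes \emph{both} polytopes are $(0,1)$) does not literally cover the corollary is accurate.

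As an aside, the sketch of Theorem~\ref{Stockholm} you append, via gauge functions $g_{\Pc},g_{\Qc}$ and an explicit construction of bad lattice points, is a genuinely different route from the paper's. The paper argues algebraically: Lemma~\ref{normal} identifies the integer decomposition property with the equality $\delta(\Pc)=h(K[\Ac])$ between the $\delta$-polynomial and the $h$-polynomial of the toric ring; Lemma~\ref{Sapporo} shows $h(K[\Ac\oplus\Bc])=h(K[\Ac])h(K[\Bc])$ under~\eqref{condition}; and the facet condition is handled by quoting the Beck--Jayawant--McAllister characterization of when $\delta(\Pc\oplus\Qc)=\delta(\Pc)\delta(\Qc)$. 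Your approach would be self-contained but hinges on the lattice-point construction you flag as delicate; the paper trades that for dependence on the cited result and standard toric-ring machinery.
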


\begin{Example}
\label{idp}
{\em
Even though $\Pc$ and $\Qc$ possess the integer decomposition property, 
the free sum $\Pc \oplus \Qc$ may fail to possess the integer decomposition property.
For example, let $\Pc \subset \RR^{3}$ be the $(0, 1)$-polytope with the vertices 
$(0,0,0), (1,1,0)$, $(1,0,1), (0,1,1)$ and $(1,0,0)$. 
Then $\Pc$ possesses the integer decomposition property, but the free sum $\Pc \oplus \Pc$ 
fails to possess the integer decomposition property. 
In fact, $z_1+z_2+z_3=2$ is the equation of a facet of $\Pc$. 
}
\end{Example}

A structure of the present paper is as follows. 
In Section \ref{jouken}, we will consider the condition for $\Pc$ and $\Qc$ 
to satisfy the equality \eqref{condition}. 
In Section \ref{proof}, a proof of Theorem \ref{Stockholm} will be given.


\section{When does the equality \eqref{condition} hold?}\label{jouken}

Let $V(\Pc)$ be the set of vertices of $\Pc$ and let $V(\Qc)$ be that of $\Qc$. 
First, for $W \subset V(\Pc) \setminus \{{\bf 0}_d\}$, let 
$$\inte(W) = (\con( W \cup \{{\bf 0}_d\}) \setminus \partial \con( W \cup \{{\bf 0}_d\})) \cap \ZZ^d.$$ 
For $W \subset V(\Qc) \setminus \{{\bf 0}_e\}$, $\inte(W)$ is also defined in the same way. 
Next, we define 
$$\Wc(\Pc)=\left\{ W \subset V(\Pc) \setminus \{{\bf 0}_d\} : W \text{ is linearly independent and } \inte(W) \not= \emptyset \right\}.$$
In the similar way, we also define $\Wc(\Qc)$. 
Last, for any $W=\{w_1,\ldots,w_m\} \in \Wc(\Pc)$ (similarly, for any $W \in \Wc(\Qc)$), let 
$$\min(W)=\min\left\{ \sum_{i=1}^m r_i : \sum_{i=1}^m r_iw_i \in \inte(W)\right\}.$$ 
Then $0<\min(W)<1$.

\begin{Proposition}\label{meidai}
Let $\Pc \subset \RR^d$ and $\Qc \subset \RR^e$ be integral convex polytopes 
containing ${\bf 0}_d$ and ${\bf 0}_e$, respectively. 
Then the free sum $\Pc \oplus \Qc$ satisfies the equality \eqref{condition}
if and only if 
\begin{itemize}
\item $\Wc(\Pc)=\emptyset$ or $\Wc(\Qc)=\emptyset$, or 
\item $\Wc(\Pc)\not=\emptyset$, $\Wc(\Qc)\not=\emptyset$ and 
$\min(F) + \min(G) > 1$ for any $F \in \Wc(\Pc)$ and $G \in \Wc(\Qc)$. 
\end{itemize}
\end{Proposition}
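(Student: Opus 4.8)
The plan is to analyze membership in $(\Pc \oplus \Qc) \cap \ZZ^{d+e}$ directly. A lattice point of $\Pc \oplus \Qc$ has the form $\lambda\mu(\alpha) + (1-\lambda)\nu(\beta)$ for some $\alpha \in \Pc$, $\beta \in \Qc$ and $\lambda \in [0,1]$; explicitly such a point is $(\lambda\alpha, (1-\lambda)\beta) \in \RR^{d+e}$. The equality \eqref{condition} says precisely that every such lattice point lies in $\mu(\Pc \cap \ZZ^d) \cup \nu(\Qc \cap \ZZ^e)$, i.e. either $\lambda\alpha \in \ZZ^d$ with $(1-\lambda)\beta = {\bf 0}_e$, or $(1-\lambda)\beta \in \ZZ^e$ with $\lambda\alpha = {\bf 0}_d$. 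So \eqref{condition} fails exactly when there exists a lattice point with $\lambda\alpha \in \ZZ^d \setminus \{{\bf 0}_d\}$ and $(1-\lambda)\beta \in \ZZ^e \setminus \{{\bf 0}_e\}$ simultaneously, that is, a point whose projections to both factors are nonzero lattice points.

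Next I would reduce the existence of such a ``bad'' point to the combinatorial data $\Wc(\Pc)$, $\Wc(\Qc)$, and the quantities $\min(W)$. Suppose $p = (u,v)$ with $u = \lambda\alpha \in \ZZ^d \setminus\{{\bf 0}_d\}$ and $v = (1-\lambda)\beta \in \ZZ^e \setminus\{{\bf 0}_e\}$ lies in $\Pc \oplus \Qc$. Since $u$ is a nonzero lattice point of $\lambda\Pc \subseteq \Pc$ (as $\lambda \le 1$ and $\Pc$ is convex containing the origin), $u$ lies in the cone spanned by some linearly independent subset $W_1 \subseteq V(\Pc)\setminus\{{\bf 0}_d\}$ with $u = \sum r_i w_i$, $r_i > 0$; moreover one can arrange that $u$ lies in the relative interior of $\con(W_1 \cup \{{\bf 0}_d\})$, so $W_1 \in \Wc(\Pc)$ after possibly shrinking. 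The scalar $s := \sum r_i$ then satisfies $s \ge \min(W_1)$ by definition, and since $u$ is a dilation by $\lambda$ of a point of $\Pc$, one checks $s \le \lambda$; hence $\lambda \ge \min(W_1)$. Symmetrically $1-\lambda \ge \min(W_2)$ for some $W_2 \in \Wc(\Qc)$, giving $\min(W_1) + \min(W_2) \le 1$. Conversely, if $F \in \Wc(\Pc)$ and $G \in \Wc(\Qc)$ satisfy $\min(F) + \min(G) \le 1$, take $u \in \inte(F)$ realizing $\min(F)$ and $v \in \inte(G)$ realizing $\min(G)$; then set $\lambda = \min(F)$, $\alpha = u/\min(F) \in \Pc$, $1-\lambda \ge \min(G)$, scale $v$ up to $v'$ with $\sum$-coefficient exactly $1-\lambda$, which still lies in $\Qc$, and obtain the bad point $(u, v')$. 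One must be slightly careful that scaling $v$ keeps it a lattice point — here is where one uses that $v \in \inte(G) \subseteq \ZZ^e$ and chooses things so that $v' \in \ZZ^e$; if $\min(F)+\min(G) < 1$ strictly we have room, and the boundary case $\min(F)+\min(G)=1$ must be handled by noting $v$ itself already witnesses failure with $\lambda = \min(F)$ exactly.

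The hard part will be this last construction: producing, from a pair $F, G$ with $\min(F)+\min(G) \le 1$, an actual interior lattice point of $\Pc \oplus \Qc$ lying in neither $\mu(\Pc \cap \ZZ^d)$ nor $\nu(\Qc \cap \ZZ^e)$, with all integrality constraints met. The subtlety is that a convex combination $(\lambda\alpha, (1-\lambda)\beta)$ need not be a lattice point even when $\lambda\alpha$ and $(1-\lambda)\beta$ individually are, unless one picks $\lambda$ so that \emph{both} dilations land on the lattice at once; the definition of $\min(W)$ via barycentric-style coordinates $\sum r_i w_i \in \inte(W) \cap \ZZ^d$ is exactly engineered so that $\lambda = \min(F)$ makes the first coordinate integral, and then one needs $(1-\min(F))\beta \in \ZZ^e$ for a suitable $\beta \in \Qc$, which is where $\min(G) \le 1 - \min(F)$ and the fact that $\inte(G)$ contains the lattice point $\sum r'_j g_j$ with $\sum r'_j = \min(G) \le 1-\lambda$ gets used — we take $\beta$ to be this lattice point divided by $(1-\lambda)$, noting $\beta \in \Qc$ and $(1-\lambda)\beta \in \ZZ^e$ automatically. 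I would organize the write-up as: (i) reformulate \eqref{condition} as the non-existence of a bad point; (ii) bad point $\Rightarrow$ a violating pair $F,G$; (iii) violating pair $\Rightarrow$ bad point; then conclude. Throughout, the hypotheses $\ZZ(\Pc\cap\ZZ^d)=\ZZ^d$ are \emph{not} needed for this Proposition (they enter only in Theorem \ref{Stockholm}), so I would not invoke them here.
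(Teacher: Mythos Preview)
Your overall strategy matches the paper's: argue both directions by contrapositive, converting between a ``bad'' lattice point of $\Pc\oplus\Qc$ and a pair $F\in\Wc(\Pc)$, $G\in\Wc(\Qc)$ with $\min(F)+\min(G)\le 1$, and invoke Carath\'eodory to obtain linearly independent vertex sets. Your $(\lambda\alpha,(1-\lambda)\beta)$ parametrization is just a repackaging of the paper's direct representation $\sum r_i\mu(v_i)+\sum s_j\nu(w_j)$ with $\sum r_i+\sum s_j\le 1$.

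There is, however, a genuine confusion in your ``violating pair $\Rightarrow$ bad point'' direction. You assert that ``$(\lambda\alpha,(1-\lambda)\beta)$ need not be a lattice point even when $\lambda\alpha$ and $(1-\lambda)\beta$ individually are''. This is simply false: the vector $(\lambda\alpha,(1-\lambda)\beta)$ has $\lambda\alpha$ as its first $d$ coordinates and $(1-\lambda)\beta$ as its last $e$ coordinates, so it lies in $\ZZ^{d+e}$ \emph{if and only if} $\lambda\alpha\in\ZZ^d$ and $(1-\lambda)\beta\in\ZZ^e$. Consequently there is no need to ``scale $v$ up to $v'$'' or to worry about a boundary case. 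If $u=\sum_i r_i v_i\in\inte(F)$ with $\sum_i r_i=\min(F)$ and $v=\sum_j s_j w_j\in\inte(G)$ with $\sum_j s_j=\min(G)$, then $(u,v)$ itself is the bad point: it is in $\ZZ^{d+e}$ because $u\in\ZZ^d$ and $v\in\ZZ^e$, and it is in $\Pc\oplus\Qc$ because
\[
(u,v)=\sum_i r_i\,\mu(v_i)+\sum_j s_j\,\nu(w_j)+\bigl(1-\min(F)-\min(G)\bigr)\cdot{\bf 0}_{d+e}
\]
is a convex combination of points of $\Pc\oplus\Qc$ (using $\min(F)+\min(G)\le 1$). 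This is exactly what the paper does; the ``hard part'' you flag does not exist.

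A smaller remark on the other direction: your claim ``one checks $s\le\lambda$'' is not automatic from conic Carath\'eodory, which does not control the coefficient sum of the reduced representation. The paper sidesteps this by applying Carath\'eodory to the full point $(u,v)$ inside $\con\bigl(\{{\bf 0}_{d+e}\}\cup\mu(V(\Pc)\setminus\{{\bf 0}_d\})\cup\nu(V(\Qc)\setminus\{{\bf 0}_e\})\bigr)$, so that the convex constraint $\sum r_i+\sum s_j\le 1$ is carried along; you should do likewise rather than treat the two factors separately.
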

\begin{proof}
{\bf ``Only if''} Assume that there exist $F \in \Wc(\Pc)$ and $G \in \Wc(\Qc)$ such that $\min(F) + \min(G) \leq 1$. 
Then each of $F$ and $G$ is linearly independent. Let $F=\{v_1,\ldots,v_n\}$ and let $G=\{w_1,\ldots,w_m\}$. 
Then there are $0<r_1,\ldots,r_n<1$, $0<s_1,\ldots,s_m<1$ such that 
$\sum_{i=1}^n r_i v_i \in \inte(F)$ and $\sum_{i=1}^m s_i w_i \in \inte(G)$, 
where $0<\sum_{i=1}^n r_i<1$ and $0<\sum_{i=1}^m s_i<1$ with $\sum_{i=1}^n r_i + \sum_{i=1}^m s_i \leq 1$. 
Let us consider $$\alpha=\sum_{i=1}^n r_i \mu(v_i)+\sum_{i=1}^m s_i \nu(w_i) 
\in \RR^{d+e}.$$
Since $\sum_{i=1}^n r_i v_i \in \ZZ^d$, we have 
$\sum_{i=1}^n r_i \mu(v_i) \in \ZZ^{d+e}$. Similarly, $\sum_{i=1}^m s_i \nu(w_i) \in \ZZ^{d+e}$. 
Thus, $\alpha \in \ZZ^{d+e}$. Moreover, since $\sum_{i=1}^n r_i + \sum_{i=1}^m s_i \leq 1$, 
we have $\alpha \in \Pc \oplus \Qc$. Hence, $\alpha \in (\Pc \oplus \Qc) \cap \ZZ^{d+e}$. On the other hand, 
since $\sum_{i=1}^n r_i v_i \not= {\bf 0}_d$ and $\sum_{i=1}^m s_i w_i \not= {\bf 0}_e$, 
we see that $\alpha \not\in \mu ( \Pc \cap \ZZ^d ) \cup \nu ( \Qc \cap \ZZ^e )$. 
These mean that the equality \eqref{condition} is not satisfied. 

\noindent
{\bf ``If''} Assume that \eqref{condition} is not satisfied. 
Since the inclusion $(\Pc \oplus \Qc) \cap \ZZ^{d+e} \supset \mu ( \Pc \cap \ZZ^d ) \cup \nu ( \Qc \cap \ZZ^e )$ 
is always satisfied, we may assume that there is $\alpha$ 
belonging to $(\Pc \oplus \Qc) \cap \ZZ^{d+e} \setminus \mu ( \Pc \cap \ZZ^d ) \cup \nu ( \Qc \cap \ZZ^e )$. 
Then $\alpha$ can be written like $$\alpha=\sum_{i=1}^n r_i \mu(v_i)+\sum_{i=1}^m s_i \nu(w_i),$$ 
where $v_1,\ldots,v_n \in V(\Pc)\setminus \{{\bf 0}_d\}$, $w_1,\ldots,w_m \in V(\Qc)\setminus \{{\bf 0}_e\}$, 
$0 \leq r_1,\ldots,r_n \leq 1$, $ 0 \leq s_1,\ldots,s_m \leq 1$ and $\sum_{i=1}^nr_i+\sum_{i=1}^ms_i \leq 1$. 
By Carath\'eodory's Theorem (cf. \cite[Corollary 7.1i]{Sch}), we can choose $\mu(v_1),\ldots,\mu(v_n),\nu(w_1),\ldots,\nu(w_m)$ 
as linearly independent vectors of $\RR^{d+e}$, that is, 
$v_1,\ldots,v_n$ are linearly independent in $\RR^d$ and so are $w_1,\ldots,w_m$ in $\RR^e$. 
Moreover, if $\sum_{i=1}^nr_i=0$, then $\alpha \in \nu(\Qc \cap \ZZ^e)$, a contradiction. Similarly, 
if $\sum_{i=1}^ms_i=0$, then $\alpha \in \mu(\Pc \cap \ZZ^e)$, a contradiction. Thus, 
we also assume $\sum_{i=1}^nr_i>0$ and $\sum_{i=1}^ms_i>0$.

We consider $v = \sum_{i=1}^n r_iv_i \in \ZZ^d$. 
Since $\sum_{i=1}^nr_i>0, \sum_{i=1}^ms_i>0$ and $\sum_{i=1}^nr_i+\sum_{i=1}^ms_i \leq 1$, we have $0<\sum_{i=1}^n r_i<1$. 
Thus, $v \in \Pc \cap \ZZ^d$. Let $v_{i_1},\ldots,v_{i_g}$ be all of $v_i$'s such that $r_i > 0$ 
and let $S=\{v_{i_1}, \ldots, v_{i_g}\}$. Then $S$ is also linearly independent and $v \in \inte(S)$. Hence, $S \in \Wc(\Pc)$. 
Similarly, let $w_{j_1},\ldots,w_{j_h}$ be all of $w_i$'s such that $s_i > 0$ 
and let $T=\{w_{j_1}, \ldots, w_{j_h}\}$. Then $T \in \Wc(\Qc)$. Now we see 
$$\min(S)+\min(T) \leq \sum_{k=1}^g r_{i_k} + \sum_{k=1}^h s_{j_k} = \sum_{i=1}^n r_i+\sum_{i=1}^m s_i \leq 1,$$ 
as required. 
\end{proof}

\begin{Example}{\em 
(a) Let $\Pc \subset \RR^d$ be a $(0,1)$-polytope. Then we easily see that $\Wc(\Pc)=\emptyset$. 
Thus, if $\Pc$ or $\Qc$ is a $(0,1)$-polytope in Proposition \ref{meidai}, then the equality \eqref{condition} always holds. 

(b) Let $\Pc=\con(\{(0,0),(1,0),(1,2)\}) \subset \RR^2$ and let $\Qc=\con(\{0,2\}) \subset \RR^1$. 
Then $\Wc(\Qc)\not=\emptyset$ but $\Wc(\Pc)=\emptyset$. Thus the equality \eqref{condition} holds. 

(c) Let $\Pc = \Qc = \con(\{(0,0),(2,1),(1,2)\}) \subset \RR^2$ and consider $W=\{(2,1),(1,2)\}$. 
Then we see that $\Wc(\Pc)=\{W\}$. On the other hand, we also have $\min(W)=2/3$. Thus the equality \eqref{condition} holds. 
}\end{Example}

\section{A proof of Theorem \ref{Stockholm}}\label{proof}

Let $\Pc \subset \RR^{d}$ be an integral convex polytope of dimension $d$. 
A {\em configuration} arising from $\Pc$ is the finite set 
$\Ac = \{ (\alpha, 1) \in \ZZ^{d+1} : \alpha \in \Pc \cap \ZZ^{d} \}$. 
We say that $\Ac$ is {\em normal} if 
\[\ZZ_{\geq 0} \Ac = \ZZ \Ac \cap \QQ_{\geq 0} \Ac,\] 
where $\ZZ_{\geq 0}$ is the set of nonnegative integers and
$\QQ_{\geq 0}$ is the set of nonnegative rational numbers.

Recall from \cite[Chapter~IX]{HibiRedBook} 
what the Ehrhart polynomial of an integral convex polytope 
is.
Let $\Pc \subset \RR^{d}$ be an integral convex polytope of dimension $d$
and, for each integer $n \geq 1$, 
write $i(\Pc, n)$ for the number of integer points belonging to $n\Pc$,
i.e., $i(\Pc, n) = |n\Pc \cap \ZZ^{d}|$.
It is known that $i(\Pc, n)$ is a polynomial in $n$ of degree $d$
with $i(\Pc, 0) = 1$.
We call $i(\Pc, n)$ the {\em Ehrhart polynomial} of $\Pc$.  We then define
the integers $\delta_{0}, \delta_{1}, \delta_{2}, \ldots$ by the formula
\[
(1 - \lambda)^{d+1} \big[1 + \sum_{n=1}^{\infty} i(\Pc, n) \lambda^{n}\big]
= \sum_{n=0}^{\infty} \delta_{n} \lambda^{n}.
\]
It then follows that $\delta_{n} = 0$ for $n > d$.  The polynomial
\[
\delta(\Pc) = \sum_{n=0}^{d} \delta_{n} \lambda^{n}
\]
is called the {\em $\delta$-polynomial} of $\Pc$.

Let $K[t_{1}, t_{1}^{-1}, \ldots, t_{d}, t_{d}^{-1}, s]$ denote the Laurent polynomial
ring in $d + 1$ variables over a field $K$. 
If $\alpha = (\alpha_{1}, \ldots, \alpha_{d}) \in \Pc \cap \ZZ^{d}$, then
we write $u_{\alpha}$ for the Laurent monomial
$t_{1}^{\alpha_{1}} \cdots t_{d}^{\alpha_{d}} \in 
K[t_{1}, t_{1}^{-1}, \ldots, t_{d}, t_{d}^{-1}]$. 
The {\em toric ring} of $\Ac$ is the subring $K[\Ac]$ of 
$K[t_{1}, t_{1}^{-1}, \ldots, t_{d}, t_{d}^{-1}, s]$
which is generated by those Laurent monomials $u_{\alpha}s$
with $\alpha \in \Pc \cap \ZZ^{d}$.
Let $K[\{ x_{\alpha} \}_{\alpha \in \Pc \cap \ZZ^{d}}]$ be
the polynomial ring in $|\Pc \cap \ZZ^{d}|$ variables over $K$
with each $\deg x_{\alpha} = 1$.
We then define the surjective ring homomorphism 
$\pi : K[\{ x_{\alpha} \}_{\alpha \in \Pc \cap \ZZ^{d}}] \to K[\Ac]$
by setting $\pi(x_{\alpha}) = u_{\alpha}s$ for each $\alpha \in \Pc \cap \ZZ^{d}$.

Finally, the Hilbert function of 
the toric ring $K[\Ac]$ of the configuration $\Ac$ arising from
an integral convex polytope $\Pc \subset \RR^{d}$ of dimension $d$
is introduced. 
We write $(K[\Ac])_{n}$ for the subspace of $K[\Ac]$ spanned by those Laurent
monomials of the form 
\[
(u_{\alpha^{(1)}}s)(u_{\alpha^{(2)}}s) \cdots (u_{\alpha^{(n)}}s)
\]
with each $\alpha^{(i)} \in \Pc \cap \ZZ^{d}$.
In particular $(K[\Ac])_{0} = K$ and 
$(K[\Ac])_{1} = \sum_{\alpha \in \Pc \cap \ZZ^{d}}K u_{\alpha}s$.
The {\em Hilbert
function} of $K[\Ac]$ is the numerical function
\[
H(K[\Ac], n) = \dim_{K} (K[\Ac])_{n}, \, \, \, \, \, n = 0, 1, 2, \ldots.
\] 
Thus in particular $H(K[\Ac], 0) = 1$ and $H(K[\Ac], 1) = |\Pc \cap \ZZ^{d}|$.
We then define
the integers $h_{0}, h_{1}, h_{2}, \ldots$ by the formula
\[
(1 - \lambda)^{d+1} \big[\sum_{n=0}^{\infty} H(K[\Ac], n) \lambda^{n}\big]
= \sum_{n=0}^{\infty} h_{n} \lambda^{n}.
\]
A basic fact \cite[Theorem 11.1]{AM} of Hilbert functions 
guarantees that $h_{n} = 0$ for $n \gg 0$.  
We say that the polynomial
\[
h(K[\Ac]) = \sum_{n=0}^{\infty} h_{n} \lambda^{n}
\]
is the {\em $h$-polynomial} of $K[\Ac]$.

\begin{Lemma}
\label{normal}
Let $\Pc \subset \RR^{d}$ be an integral convex polytope of dimension $d$ and 
$\Ac \subset \ZZ^{d+1}$ the configuration arising from $\Pc$. 
Suppose that $\Pc$ satisfies $\ZZ(\Pc \cap \ZZ^d) = \ZZ^d$. Then the following conditions are equivalent:
\begin{enumerate}
\item[(i)] $\Pc$ possesses the integer decomposition property;
\item[(ii)] $\Ac$ is normal;
\item[(iii)] $\delta(\Pc) = h(K[\Ac])$.
\end{enumerate}
\end{Lemma}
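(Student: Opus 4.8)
The plan is to translate everything into one combinatorial picture: for each integer $n\ge 0$, slice the cone $\QQ_{\ge 0}\Ac\subset\RR^{d+1}$ at level $x_{d+1}=n$. Three sets sit there. First, the elements of $\ZZ_{\ge 0}\Ac$ with last coordinate $n$ are exactly the lattice points of $n\Pc$ expressible as a sum of $n$ lattice points of $\Pc\cap\ZZ^d$; call these the \emph{decomposable} points at level $n$. Second, the Laurent monomials $u_{\alpha^{(1)}}s\cdots u_{\alpha^{(n)}}s$ spanning $(K[\Ac])_n$ are in bijection with those same decomposable points, so that $H(K[\Ac],n)$ counts them. Third, the lattice points of $\ZZ\Ac\cap\QQ_{\ge 0}\Ac$ at level $n$ are $n\Pc\cap\ZZ^d$, once we know that $\ZZ\Ac=\ZZ^{d+1}$. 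This last equality is forced by the hypothesis: since ${\bf 0}_d\in\Pc\cap\ZZ^d$, every difference $(\alpha,1)-({\bf 0}_d,1)=(\alpha,0)$ lies in $\ZZ\Ac$, hence $\ZZ(\Pc\cap\ZZ^d)\times\{0\}=\ZZ^d\times\{0\}\subseteq\ZZ\Ac$, and adjoining $({\bf 0}_d,1)$ gives $\ZZ\Ac=\ZZ^{d+1}$.

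With this dictionary, (i) $\iff$ (ii) is immediate. Normality, $\ZZ_{\ge 0}\Ac=\ZZ\Ac\cap\QQ_{\ge 0}\Ac$, holds automatically at level $0$, since every generator of $\Ac$ has positive last coordinate and so both sides meet $x_{d+1}=0$ only in the origin; hence it is equivalent to the two sets agreeing at every level $n\ge 1$. By the slicing this says precisely that, for each $n\ge 1$, every lattice point of $n\Pc$ is a sum of $n$ lattice points of $\Pc$ — that is, $\Pc$ has the integer decomposition property. (The inclusion $\ZZ_{\ge 0}\Ac\subseteq\ZZ\Ac\cap\QQ_{\ge 0}\Ac$ is automatic because a decomposable point lies in $n\Pc$ by convexity, so only the reverse inclusion, which is IDP, is at stake.)

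For (ii) $\iff$ (iii), note first that the slicing gives $H(K[\Ac],n)\le i(\Pc,n)$ for every $n\ge 0$, with equality exactly when every lattice point of $n\Pc$ is decomposable; together with the level-$0$ remark, equality for all $n$ is equivalent to normality. Now $\delta(\Pc)=(1-\lambda)^{d+1}\sum_{n\ge 0}i(\Pc,n)\lambda^n$ (using $i(\Pc,0)=1$) and $h(K[\Ac])=(1-\lambda)^{d+1}\sum_{n\ge 0}H(K[\Ac],n)\lambda^n$ by definition, and both right-hand sides are genuine polynomials: on the Ehrhart side because $\delta_n=0$ for $n>d$, and on the Hilbert side by \cite[Theorem~11.1]{AM}, as recorded above. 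Since multiplication by $(1-\lambda)^{d+1}$ is injective on $K[[\lambda]]$, the equality $\delta(\Pc)=h(K[\Ac])$ is equivalent to $\sum_{n\ge 0}i(\Pc,n)\lambda^n=\sum_{n\ge 0}H(K[\Ac],n)\lambda^n$, hence to $i(\Pc,n)=H(K[\Ac],n)$ for all $n$, hence to normality. This closes the loop (i) $\Rightarrow$ (ii) $\Rightarrow$ (iii) $\Rightarrow$ (i).

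The substance is the bookkeeping in the first paragraph — pinning down the three level-$n$ slices and verifying $\ZZ\Ac=\ZZ^{d+1}$; after that everything is formal. The only point that needs a little care is that normality and the integer decomposition property are statements about all dilates of $\Pc$ at once, so one should not attempt to match $\delta(\Pc)$ and $h(K[\Ac])$ coefficient by coefficient directly, but rather clear the common denominator $(1-\lambda)^{d+1}$ and compare the full Ehrhart and Hilbert series term by term — legitimate because that denominator is a non-zero-divisor in $K[[\lambda]]$.
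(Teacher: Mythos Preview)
Your argument is correct and follows essentially the same route as the paper: both unpack IDP and normality as level-by-level statements in the cone $\QQ_{\ge 0}\Ac$, use the hypothesis to obtain $\ZZ\Ac=\ZZ^{d+1}$, and then compare $i(\Pc,n)$ with $H(K[\Ac],n)$ termwise to link (iii) with (i). One small caveat: your derivation of $\ZZ\Ac=\ZZ^{d+1}$ invokes ${\bf 0}_d\in\Pc\cap\ZZ^d$, which the lemma as stated does not assume; the paper simply records the identification ``$\ZZ(\Pc\cap\ZZ^d)=\ZZ^d$, i.e., $\ZZ\Ac=\ZZ^{d+1}$'' without further comment, and since every application in the paper is to polytopes containing the origin the point is harmless here.
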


\begin{proof}
It follows that $\Pc$ possesses the integer decomposition property 
if and only if, for $\alpha \in n \Pc \cap \ZZ^{d}$, 
one has $(\alpha, n) \in \ZZ_{\geq 0} \Ac$. 
Since $\ZZ(\Pc \cap \ZZ^d) = \ZZ^d$, i.e., $\ZZ \Ac = \ZZ^{d+1}$, 
it follows that $\Ac$ is normal if and only if $\ZZ_{\geq 0} \Ac = \ZZ^{d+1} \cap \QQ_{\geq 0} \Ac$.
Moreover, for $\alpha \in \QQ^{d}$, one has $\alpha \in n \Pc$ 
if and only if $(\alpha, n) \in \QQ_{\geq 0}\Ac$.
Hence (i) $\Leftrightarrow$ (ii) follows.

In general, one has $i(\Pc, n) \geq H(K[\Ac], n)$ for $n \in \ZZ_{\geq 0}$.
Furthermore, it follows that 
$i(\Pc, n) = H(K[\Ac], n)$ for all $n \in \ZZ_{\geq 0}$ if and only if 
$\Pc$ possesses the integer decomposition property.  
Hence (i) $\Leftrightarrow$ (iii) follows.
\, \, \, \, \, \, \, \, \, \, \, \, \, \, \, \, \, \, \,
\end{proof}

\begin{Lemma}
\label{Sapporo}
Let $\Pc \subset \RR^{d}$ and $\Qc \subset \RR^e$ be integral convex polytopes 
of dimension $d$ and $e$ which contain the origin of $\RR^{d}$ and $\RR^e$, respectively. 
Let $\Ac \subset \ZZ^{d+1}$ and $\Bc \subset \ZZ^{e+1}$ be the configurations 
arising from $\Pc$ and $\Qc$, respectively. 
Let $\Ac \oplus \Bc \subset \ZZ^{d+e+1}$ denote the configuration arising from 
the free sum $\Pc \oplus \Qc \subset \RR^{d+e}$.  Suppose that 
\begin{eqnarray*}
(\Pc \oplus \Qc) \cap \ZZ^{d+e} = \mu(\Pc \cap \ZZ^{d}) \cup \nu(\Qc \cap \ZZ^{e}).
\end{eqnarray*}
Then
\[h(K[\Ac \oplus \Bc]) = h(K[\Ac])h(K[\Bc]).\]
Furthermore, if $\Pc \oplus \Qc$ possesses the integer decomposition property,
then 
\[
\delta(\Pc \oplus \Qc) = \delta(\Pc)\delta(\Qc).
\]
\end{Lemma}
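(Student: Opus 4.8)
The plan is to compute the Hilbert function of $K[\Ac\oplus\Bc]$ in terms of those of $K[\Ac]$ and $K[\Bc]$, read off the first identity by a power-series manipulation, and then deduce the $\delta$-polynomial identity by combining this with the equivalence (Lemma \ref{normal}) between the integer decomposition property of a polytope and the equality of its $\delta$-polynomial with the $h$-polynomial of its toric ring. For each $k\ge0$ let $\Pc^{(k)}=\{\alpha^{(1)}+\cdots+\alpha^{(k)}:\alpha^{(i)}\in\Pc\cap\ZZ^d\}\subset\ZZ^d$ be the $k$-fold sumset of $\Pc\cap\ZZ^d$, so $\Pc^{(0)}=\{{\bf 0}_d\}$; since ${\bf 0}_d\in\Pc$ one has $\Pc^{(0)}\subseteq\Pc^{(1)}\subseteq\cdots$. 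The graded piece $(K[\Ac])_k$ is spanned by the pairwise distinct Laurent monomials $u_\gamma s^k$ with $\gamma\in\Pc^{(k)}$, so $H(K[\Ac],k)=|\Pc^{(k)}|$; define $\Qc^{(l)}$ analogously. By the hypothesis $(\Pc\oplus\Qc)\cap\ZZ^{d+e}=\mu(\Pc\cap\ZZ^d)\cup\nu(\Qc\cap\ZZ^e)$, every product of $n$ generators of $K[\Ac\oplus\Bc]$ has the form $u_{(\gamma,\delta)}s^n$ with $\gamma\in\Pc^{(k)}$ and $\delta\in\Qc^{(n-k)}$ for some $0\le k\le n$ (here $u_{(\gamma,\delta)}$ denotes the Laurent monomial in $t_1,\dots,t_{d+e}$ with exponent vector $(\gamma,\delta)$), and every such monomial occurs; since distinct Laurent monomials are linearly independent,
\[
H(K[\Ac\oplus\Bc],n)=\Big|\bigcup_{k=0}^{n}\Pc^{(k)}\times\Qc^{(n-k)}\Big|.
\]

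For $\gamma\in\bigcup_k\Pc^{(k)}$ put $a(\gamma)=\min\{k:\gamma\in\Pc^{(k)}\}$ and define $b(\delta)$ similarly; by monotonicity $(\gamma,\delta)$ lies in the displayed union if and only if $a(\gamma)+b(\delta)\le n$. Letting $p_i$ (resp.\ $q_j$) be the number of $\gamma$ with $a(\gamma)=i$ (resp.\ $\delta$ with $b(\delta)=j$), we have $\sum_{i=0}^{k}p_i=|\Pc^{(k)}|=H(K[\Ac],k)$, hence $p_i=H(K[\Ac],i)-H(K[\Ac],i-1)$ with the convention $H(K[\Ac],-1)=0$, and similarly for $q_j$; therefore $H(K[\Ac\oplus\Bc],n)=\sum_{i+j\le n}p_iq_j$. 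Passing to generating functions and using $\sum_ip_i\lambda^i=(1-\lambda)\sum_iH(K[\Ac],i)\lambda^i$ together with its analogue for $q_j$,
\[
\sum_{n\ge0}H(K[\Ac\oplus\Bc],n)\lambda^n=(1-\lambda)\Big(\sum_{i\ge0}H(K[\Ac],i)\lambda^i\Big)\Big(\sum_{j\ge0}H(K[\Bc],j)\lambda^j\Big).
\]
Multiplying by $(1-\lambda)^{d+e+1}$, recalling that $\dim(\Pc\oplus\Qc)=d+e$ and that $h(K[\Ac])=(1-\lambda)^{d+1}\sum_iH(K[\Ac],i)\lambda^i$ (and likewise for $\Bc$ and $\Ac\oplus\Bc$), we obtain $h(K[\Ac\oplus\Bc])=h(K[\Ac])h(K[\Bc])$.

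For the remaining assertion, note first that if $\Pc\oplus\Qc$ possesses the integer decomposition property then so do $\Pc$ and $\Qc$: given $\gamma\in n\Pc\cap\ZZ^d$, the point $\mu(\gamma)$ lies in $n(\Pc\oplus\Qc)\cap\ZZ^{d+e}$ and hence is a sum of $n$ lattice points of $\Pc\oplus\Qc$; by the hypothesis each summand equals $\mu(\alpha^{(i)})$ or $\nu(\beta^{(i)})$, and comparing the first $d$ coordinates expresses $\gamma$ as a sum of at most $n$ points of $\Pc\cap\ZZ^d$, which we pad with copies of ${\bf 0}_d$ to length exactly $n$. Now the equivalence (i)$\Leftrightarrow$(iii) of Lemma \ref{normal}---whose proof uses only the integer decomposition property, not the hypothesis $\ZZ(\Pc\cap\ZZ^d)=\ZZ^d$---applied to $\Pc$, to $\Qc$ and to $\Pc\oplus\Qc$ gives $\delta(\Pc)=h(K[\Ac])$, $\delta(\Qc)=h(K[\Bc])$ and $\delta(\Pc\oplus\Qc)=h(K[\Ac\oplus\Bc])$, whence $\delta(\Pc\oplus\Qc)=h(K[\Ac\oplus\Bc])=h(K[\Ac])h(K[\Bc])=\delta(\Pc)\delta(\Qc)$.

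The main obstacle is the identification of the spanning monomials of $(K[\Ac\oplus\Bc])_n$: this is exactly where the hypothesis \eqref{condition} enters (without it $(\Pc\oplus\Qc)\cap\ZZ^{d+e}$ can contain lattice points that do not split into a $\mu$-part and a $\nu$-part, and the product decomposition breaks), and one must keep track of the coincidence $\mu({\bf 0}_d)=\nu({\bf 0}_e)$, so that a single Laurent monomial may arise from products with several values of $k$; the monotone sumsets $\Pc^{(k)}$, $\Qc^{(l)}$ and the minimal-length functions $a(\cdot)$, $b(\cdot)$ are precisely the bookkeeping device for this overlap. (Conceptually $K[\Ac\oplus\Bc]$ is the image of $K[\Ac]\otimes_{K[s]}K[\Bc]$ under multiplication, and the extra factor $1-\lambda$ above reflects cutting $K[\Ac]\otimes_KK[\Bc]$ by the nonzerodivisor $s\otimes1-1\otimes s$; the direct count sidesteps having to verify this.)
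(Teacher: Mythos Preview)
Your argument is correct. The paper's proof is much shorter and purely algebraic: it observes that, under the hypothesis \eqref{condition}, the generators of $K[\Ac\oplus\Bc]$ are exactly the images of the generators of $K[\Ac]$ and $K[\Bc]$ after identifying the two degree variables, so that $K[\Ac\oplus\Bc]\cong(K[\Ac]\otimes_K K[\Bc])/(s-s')$; since $s-s'$ is a degree-one nonzerodivisor in the domain $K[\Ac]\otimes_K K[\Bc]$, the $h$-polynomial is unchanged by this quotient, and $h(K[\Ac]\otimes_K K[\Bc])=h(K[\Ac])h(K[\Bc])$ is immediate. Your route replaces this algebraic step by an explicit monomial count via the nested sumsets $\Pc^{(k)}$, $\Qc^{(l)}$ and the minimal-length functions $a(\cdot),b(\cdot)$, arriving at the same generating-function identity with the extra factor $1-\lambda$. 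This is longer but more elementary: it makes the role of the shared origin (and the resulting overcount) completely transparent, and it avoids having to justify that the kernel of the surjection $K[\Ac]\otimes_K K[\Bc]\twoheadrightarrow K[\Ac\oplus\Bc]$ is exactly $(s-s')$, which the paper states without comment. You were also careful to note that the implication (i)$\Rightarrow$(iii) of Lemma~\ref{normal} does not require the spanning hypothesis $\ZZ(\Pc\cap\ZZ^d)=\ZZ^d$; the paper invokes Lemma~\ref{normal} here without that hypothesis being part of the present statement, so your remark actually fills a small gap. Your final parenthetical is essentially the paper's argument in one line.
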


\begin{proof}
Let $K[\Ac] \subset K[t_{1}, t_{1}^{-1}, \ldots, t_{d}, t_{d}^{-1}, s]$
and $K[\Bc] \subset K[{t'}_{1}, {t'}_{1}^{-1}, \ldots, {t'}_{e}, {t'}_{e}^{-1}, s']$.
Then $K[\Ac \oplus \Bc] = (K[\Ac] \otimes K[\Bc])/(s - s')$.
Hence 
$h(K[\Ac \oplus \Bc]) =  
h(K[\Ac] \otimes K[\Bc]) = h(K[\Ac])h(K[\Bc])$, as desired. 

If, furthermore, $\Pc \oplus \Qc$ possesses the integer decomposition property,
then each of $\Pc$ and $\Qc$ possesses the integer decomposition property.
Lemma \ref{normal} then says that 
$\delta(\Pc \oplus \Qc) = h(K[\Ac \oplus \Bc])$, 
$\delta(\Pc) = h(K[\Ac])$ and
$\delta(\Qc) = h(K[\Bc])$.  
Hence $\delta(\Pc \oplus \Qc) = \delta(\Pc)\delta(\Qc)$, as required.
\, \, \, \, \, \, \, \, \, \, \, \, \, \, \, \, \, \, \, \, \, \, \, \, \, \, \, \, \, \, 
\end{proof}

We also recall the following theorem. 

\begin{Theorem}[{\cite[Theorem 1.4]{BJM}}]\label{facets}
Let $\Pc \subset \RR^d$ and $\Qc \subset \RR^e$ be integral convex polytopes 
containing the origin (of $\RR^d$ or $\RR^e$). 
Then the equality $\delta(\Pc \oplus \Qc) = \delta(\Pc) \delta(\Qc)$ holds 
if and only if either $\Pc$ or $\Qc$ satisfies that 
the equation of each facet is of the form $\sum_{i=1}^f a_{i}z_{i} = b$, 
where each $a_{i}$ is an integer, $b \in \{ 0, 1\}$ and $f \in \{d,e\}$. 
\end{Theorem}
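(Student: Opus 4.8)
The plan is to pass to the gauge (Minkowski functional) of each polytope and rewrite everything in terms of Ehrhart series. For a lattice polytope $\Pc$ containing ${\bf 0}_d$, set $\rho_\Pc(x) = \min\{ p \geq 0 : x \in p\Pc\}$ for $x$ in the cone $\RR_{\geq 0}\Pc$; since the dilates $p\Pc$ are nested, $x \in p\Pc$ holds iff $p \geq \rho_\Pc(x)$, and writing the facets of $\Pc$ not containing the origin as $\langle a_H, z\rangle = b_H$ with $a_H$ a primitive integer normal and $b_H \in \ZZ_{>0}$, one has $\rho_\Pc(x) = \max_H \langle a_H, x\rangle / b_H$. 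First I would record the elementary description of the free sum: $(x,y) \in n(\Pc \oplus \Qc)$ iff $x \in p\Pc$ and $y \in q\Qc$ for some reals $p,q \geq 0$ with $p + q = n$, equivalently $\rho_\Pc(x) + \rho_\Qc(y) \leq n$. Hence $i(\Pc \oplus \Qc, n) = \#\{(x,y) \in \ZZ^{d}\times \ZZ^{e} : \rho_\Pc(x) + \rho_\Qc(y) \leq n\}$, and similarly $i(\Pc,n)$ counts lattice $x$ with $\rho_\Pc(x) \leq n$.

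Next I would convert these counts into $\delta$-polynomials. Summing a geometric series gives $(1-\lambda)\sum_n i(\Pc,n)\lambda^n = \sum_x \lambda^{\lceil \rho_\Pc(x)\rceil}$ (the sum over $x \in \RR_{\geq 0}\Pc \cap \ZZ^d$), so $\delta(\Pc) = (1-\lambda)^{d} \sum_x \lambda^{\lceil\rho_\Pc(x)\rceil}$, and likewise for $\Qc$ and, using $\dim(\Pc \oplus \Qc) = d+e$, for $\Pc \oplus \Qc$. Comparing, $\delta(\Pc \oplus \Qc) = \delta(\Pc)\delta(\Qc)$ holds iff the two power series $\sum_{x,y}\lambda^{\lceil\rho_\Pc(x)+\rho_\Qc(y)\rceil}$ and $\sum_{x,y}\lambda^{\lceil\rho_\Pc(x)\rceil + \lceil\rho_\Qc(y)\rceil}$ coincide. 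Since $\lceil a+b\rceil \leq \lceil a\rceil + \lceil b\rceil$ for every pair, comparing cumulative coefficients shows the two series agree iff $\lceil\rho_\Pc(x) + \rho_\Qc(y)\rceil = \lceil\rho_\Pc(x)\rceil + \lceil\rho_\Qc(y)\rceil$ for every pair of lattice points. Writing $\{r\} = r - \lfloor r\rfloor$ for the fractional part, an elementary analysis of the ceiling function shows this identity fails for a given pair exactly when $\rho_\Pc(x)$ and $\rho_\Qc(y)$ are both non-integers with $\{\rho_\Pc(x)\} + \{\rho_\Qc(y)\} \leq 1$. Thus multiplicativity of $\delta$ is equivalent to the nonexistence of such a witnessing pair.

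It remains to match this with the facet condition. The requirement that every facet of $\Pc$ have an equation $\sum a_i z_i = b$ with integer $a_i$ and $b \in \{0,1\}$ is, after normalizing $a_H$ to be primitive, exactly the condition $b_H = 1$ for every facet $H$ not through the origin (a facet with primitive $b_H = c \geq 2$ admits no integer rewriting with $b \in \{0,1\}$). In that case $\rho_\Pc(x) = \max_H \langle a_H,x\rangle$ is an integer for every lattice point $x$, so $\{\rho_\Pc(x)\} \equiv 0$ and no witnessing pair can exist. This yields the ``if'' direction: if one of $\Pc, \Qc$ has this facet property, then $\delta(\Pc \oplus \Qc) = \delta(\Pc)\delta(\Qc)$.

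For the ``only if'' direction I would argue contrapositively: assuming neither $\Pc$ nor $\Qc$ has the facet property, each has a facet $H$ not through the origin with primitive normal and $b_H = c \geq 2$, and I would construct a lattice point $x$ with $\{\rho_\Pc(x)\} = 1/c \leq 1/2$, and symmetrically a point $y$ with $\{\rho_\Qc(y)\} = 1/c' \leq 1/2$; then $\{\rho_\Pc(x)\} + \{\rho_\Qc(y)\} \leq 1$ produces a witnessing pair, so $\delta$ fails to be multiplicative. This construction is the main obstacle: I need a lattice point lying in the relatively open cone over the pyramid $\con(H \cup \{{\bf 0}_d\})$, so that $H$ is the unique binding facet and $\rho_\Pc(x) = \langle a_H,x\rangle/c$, while simultaneously forcing $\langle a_H,x\rangle \equiv 1 \pmod c$. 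I would obtain this by first taking an interior lattice point $q_0$ of a large dilate of that pyramid (which exists by positivity of the leading Ehrhart coefficient), then using primitivity of $a_H$ to choose a lattice vector $u$ with $\langle a_H, u\rangle = 1$ and replacing $q_0$ by $t q_0 + j u$: for $t$ large all these points remain in the open cone, and a suitable $j \in \{0,\dots,c-1\}$ makes the level congruent to $1$ modulo $c$. Controlling the binding facet and the residue class at once is the delicate point where most of the care is required.
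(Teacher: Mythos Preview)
The paper does not prove this theorem at all: it is quoted verbatim as \cite[Theorem~1.4]{BJM} and used as a black box in the proof of Theorem~\ref{Stockholm}. So there is no ``paper's own proof'' to compare against; what you have written is a self-contained argument for a result the authors import from the literature.

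Your approach is sound and is in fact close in spirit to the original argument of Beck--Jayawant--McAllister. The reduction via the gauge $\rho_\Pc$ to the identity $\lceil \rho_\Pc(x)+\rho_\Qc(y)\rceil=\lceil \rho_\Pc(x)\rceil+\lceil \rho_\Qc(y)\rceil$ is correct: the key points---that $\rho_{\Pc\oplus\Qc}(x,y)=\rho_\Pc(x)+\rho_\Qc(y)$, that $(1-\lambda)^{d}\sum_x\lambda^{\lceil\rho_\Pc(x)\rceil}=\delta(\Pc)$, and that termwise comparison via cumulative coefficients forces equality of exponents---all check out. The ``if'' direction is then immediate, since $b_H=1$ for every facet off the origin makes $\rho_\Pc$ integer-valued on $\ZZ^d$. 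For the ``only if'' direction your construction also works: for any $x$ in the cone $\RR_{\ge 0}H$ one has $\rho_\Pc(x)=\langle a_H,x\rangle/b_H$ exactly (not merely when $H$ is the \emph{unique} binding facet), so you only need a lattice point in that cone with $\langle a_H,x\rangle\equiv 1\pmod{c}$; your $tq_0+ju$ trick with $q_0$ an interior lattice point of the cone and $\langle a_H,u\rangle=1$ does this cleanly. The one place to tighten the write-up is to state explicitly that for $x=\sum t_iv_i$ with $v_i\in H$ one gets $\rho_\Pc(x)=\sum t_i=\langle a_H,x\rangle/b_H$ directly, so the ``unique binding facet'' worry you flag is not actually an obstacle.
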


We are now in the position to give a proof of Theorem \ref{Stockholm}.

\begin{proof}[Proof of Theorem \ref{Stockholm}]
Assume that each of $\Pc$ and $\Qc$ possesses the integer decomposition property and 
either $\Pc$ or $\Qc$ satisfies the condition on its facets described in Theorem \ref{Stockholm}. 
It then follows from Theorem \ref{facets} that the condition on the facets is equivalent to satisfying that 
\begin{align}\label{111}
\delta(\Pc \oplus \Qc) = \delta(\Pc) \delta(\Qc). 
\end{align}
Moreover, since each of $\Pc$ and $\Qc$ possesses the integer decomposition property, 
we have the equalities $\delta(\Pc)=h(K[\Ac])$ and $\delta(\Qc)=h(K[\Bc])$ by Lemma \ref{normal}. 
In particular, one has 
\begin{align}\label{222}
\delta(\Pc) \delta(\Qc)=h(K[\Ac])h(K[\Bc]). 
\end{align}
Furthermore, since the equality \eqref{condition} is satisfied, it follows from Lemma \ref{Sapporo} that 
\begin{align}\label{333}
h(K[\Ac \oplus \Bc])=h(K[\Ac])h(K[\Bc]), 
\end{align}
where $\Ac \oplus \Bc \subset \ZZ^{d+e+1}$ denotes the configuration arising from $\Pc \oplus \Qc \subset \RR^{d+e}$. 
Hence, by \eqref{111}, \eqref{222} and \eqref{333}, we obtain $$\delta(\Pc \oplus \Qc)=h(K[\Ac \oplus \Bc]).$$ 
Therefore, from Lemma \ref{normal}, we conclude that $\Pc \oplus \Qc$ possesses the integer decomposition property. 

On the other hand, suppose that $\Pc \oplus \Qc$ possesses the integer decomposition property. 
Then it is easy to see that each of $\Pc$ and $\Qc$ possesses the integer decomposition property. 
Moreover, since $\Pc \oplus \Qc \subset \RR^{d+e}$ satisfies \eqref{condition}, 
the equality $\delta(\Pc \oplus \Qc)=\delta(\Pc)\delta(\Qc)$ holds by Lemma \ref{Sapporo}. 
Therefore, by Theorem \ref{facets}, either $\Pc$ or $\Qc$ satisfies the condition on its facets 
described in Theorem \ref{Stockholm}, as required. 
\end{proof}


\begin{thebibliography}{99}
\bibitem{AM}
M.~F.~Atiyah and I.~G.~Macdonald,
``Introduction to Commutative Algebra,'' Addison--Wesley, 1969.
\bibitem{BJM}
M.~Beck, P.~Jayawant and T.~B.~McAllister,
Lattice-point generating functions for free sums of convex sets,
{\em J. Combin. Theory, Ser. A} {\bf 120} (2013), 1246--1262.
\bibitem{HibiRedBook}
T.~Hibi,
``Algebraic combinatorics on convex polytopes,''
Carslaw Publications, Glebe, N.S.W., Australia, 1992.
\bibitem{Sch}
A. Schrijver, ``Theory of Linear and Integer Programming,'' John Wiley \& Sons, 1986.
\end{thebibliography}
\end{document}